\theoremstyle{theorem}
\newtheorem{theorem}{Theorem}
\newtheorem{lemma}{Lemma}
\theoremstyle{definition}
\newcommand{\vol}{{\rm vol}}
\newcommand{\R}{{\mathbb R}}
\newcommand{\Z}{{\mathbb Z}}
\newcommand{\T}{{\mathbb T}}
\numberwithin{proposition}{section}
\numberwithin{equation}{section}
\numberwithin{corollary}{section}
\numberwithin{definition}{section}
\numberwithin{lemma}{section}
\numberwithin{claim}{section}
\numberwithin{remark}{section}
\title[Homologically independent loops on tori]{Length product of homologically independent loops for tori}
\author{F.~Balacheff and S.~Karam}
\thanks{This work acknowledges support by grants ANR CEMPI (ANR-11-LABX-0007-01) and ANR Finsler (ANR-12-BS01-0009-02)}
\address{F. Balacheff, Laboratoire Paul Painlev\'e, Bat. M2, Universit\'e des Sciences et Technologies,
59 655 Villeneuve d'Ascq, France.}
\email{florent.balacheff@math.univ-lille1.fr}
\address{S. Karam, Laboratoire Paul Painlev\'e, Bat. M2, Universit\'e des Sciences et Technologies,
59 655 Villeneuve d'Ascq, France.}
\email{steve.karam@math.univ-lille1.fr}
\keywords{Minimal hypersurface, second Minkowski theorem, systolic geometry, torus}
\subjclass{53C23}
\begin{document}%

\begin{abstract}
We prove that any Riemannian torus of dimension $m$ with unit volume admits $m$ homologically independent closed geodesics whose length product is bounded from above by $m^m$.
\end{abstract}

\maketitle

The goal of this note is to prove the following analog of Minkowski's second theorem.

\begin{theorem}\label{th:tori}
Let $(\T^m,g)$ be a Riemannian torus of dimension $m\geq2$.
There exist $m$ homologically independent closed geodesics $(\gamma_1,\ldots,\gamma_{m})$ whose length product satisfies 
$$
\prod_{i=1}^{m} \ell_g(\gamma_i)\leq  m^m \cdot \vol(\T^m,g).
$$
\end{theorem}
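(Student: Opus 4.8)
The plan is to transfer the problem to the homology lattice $\Lambda := H_1(\T^m;\Z)\cong\Z^m$ sitting inside $H_1(\T^m;\R)\cong\R^m$, and to apply Minkowski's second theorem there. First I would equip $H_1(\T^m;\R)$ with the stable norm $\|\cdot\|$ associated to $g$, whose unit ball $B$ is a symmetric convex body, and consider the successive minima $\lambda_1\le\cdots\le\lambda_m$ of $B$ with respect to $\Lambda$. Minkowski's second theorem then yields $m$ linearly independent lattice classes $a_1,\dots,a_m$ with $\|a_i\|=\lambda_i$ and
$$
\prod_{i=1}^m\lambda_i\le \frac{2^m}{\vol(B)}\,\mathrm{covol}(\Lambda),
$$
where the ratio $\vol(B)/\mathrm{covol}(\Lambda)$ is independent of the chosen Euclidean structure on $H_1(\T^m;\R)$. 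Since $a_1,\dots,a_m$ are homologically independent, realizing each $a_i$ by a shortest closed geodesic $\gamma_i$ reduces the theorem to two estimates: that $\ell_g(\gamma_i)$ is controlled by $\lambda_i$, and that $\vol(B)\cdot\vol(\T^m,g)\ge (2/m)^m\,\mathrm{covol}(\Lambda)$. Note that Minkowski supplies only the factor $2^m$, so the geometry must account for the remaining $(m/2)^m$.

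The geometric heart is this volume comparison, and it is where minimal hypersurfaces enter. Working in the Poincar\'e-dual picture, a class in $H_{m-1}(\T^m;\Z)$ is represented by an area-minimizing hypersurface whose $(m-1)$-volume equals the stable norm in degree $m-1$; in contrast to degree $1$, this norm is \emph{exactly} attained by the minimizer. Poincar\'e duality together with the cup-product pairing $H^1\times H^{m-1}\to H^m\cong\R$, given by integration over $(\T^m,g)$, identifies the stable $(m-1)$-norm as the dual of $\|\cdot\|$ weighted by this pairing; dualizing the successive minima and applying a coarea/slicing argument to the minimal hypersurfaces (every transverse slice has volume at least the $(m-1)$-systole, while the transverse width contributes the complementary factor) then produces the lower bound $\vol(B)\cdot\vol(\T^m,g)\ge(2/m)^m\,\mathrm{covol}(\Lambda)$.

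Finally I would combine the two ingredients: substituting the volume comparison into the Minkowski estimate gives $\prod_i\lambda_i\le m^m\vol(\T^m,g)$, and bounding $\ell_g(\gamma_i)$ in terms of $\lambda_i$ completes the proof. \textbf{The main obstacle} is twofold. The sharp constant in the minimal-hypersurface volume comparison is delicate: slicing arguments are lossy, and extracting precisely $(2/m)^m$ requires careful control of the isoperimetric behaviour of the minimizers. Moreover, in degree $1$ the stable norm $\lambda_i$ may be \emph{strictly} smaller than the length $\ell_g(\gamma_i)$ of the shortest geodesic in the class, so this gap must be controlled separately; the slack between the non-sharp bound $m^m$ and the much smaller flat-torus value is precisely what leaves room to absorb it.
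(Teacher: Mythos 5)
Your strategy---pass to the stable norm on $H_1(\T^m;\R)$, apply Minkowski's second theorem to its unit ball $B$, and close with a volume comparison between $\vol(B)$ and $\vol(\T^m,g)$---is not the paper's route, and as written it has two genuine gaps, both of which you flag but neither of which you close. The first is the comparison $\vol(B)\cdot\vol(\T^m,g)\ge (2/m)^m\,\mathrm{covol}(\Lambda)$. Combined with Minkowski's theorem this immediately gives $\prod_i\lambda_i\le m^m\vol(\T^m,g)$, so \emph{all} of the geometric content of the theorem has been pushed into this one step, and the sketch you offer for it (mass/comass duality between the degree-$1$ and degree-$(m-1)$ stable norms, plus a single coarea slicing in which ``every transverse slice has volume at least the $(m-1)$-systole'') is not a proof: the duality statement by itself produces no volume lower bound, and one slicing of one minimal hypersurface cannot see all $m$ successive minima at once. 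What is actually required is an iterated construction---take a nearly minimal hypersurface dual to one class, restrict the remaining classes to it, find a nearly minimal hypersurface inside that, and so on, then integrate up with the coarea formula---and that induction is precisely the paper's proof of its Theorem~\ref{th:Minkowski2}, carried out with $\Z_2$-cohomology classes of nonzero cup product rather than with the stable norm.

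The second gap is fatal to the statement even if you grant the volume comparison. Minkowski's theorem hands you lattice classes $a_i$ whose \emph{stable} norms are the $\lambda_i$, but the theorem asks for closed geodesics, and the shortest closed geodesic in the class $a_i$ has length $\ell(a_i)\ge\|a_i\|$ with no a priori multiplicative bound in the other direction: the stable norm is an asymptotic quantity ($\ell(ka_i)/k$ as $k\to\infty$), the known comparison (Burago) is only additive with a metric-dependent constant, and you give no argument bounding the ratio $\ell(a_i)/\|a_i\|$ by a constant depending only on $m$. Your remark that the slack between $m^m$ and the optimal flat constant ``leaves room to absorb it'' is therefore unsupported---a fixed amount of multiplicative slack cannot absorb an uncontrolled ratio. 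This is exactly why the paper never introduces the stable norm: it works with $L(\zeta)=\inf\{\ell_g(\gamma)\mid \langle\zeta,[\gamma]\rangle\ne 0\}$ for $\zeta\in H^1(\T^m;\Z_2)$, i.e.\ with infima of lengths of \emph{actual} closed curves detected by cohomology, proves $\prod_i L(\zeta_i)\le m^m\vol(M,g)$ whenever $\zeta_1\cup\cdots\cup\zeta_m\ne0$, and then obtains Theorem~\ref{th:tori} by letting the $\gamma_i$ realize the successive minima of genuine length and the $\zeta_i$ be the dual basis. To salvage your approach you would have to replace the stable successive minima by these length successive minima from the outset, at which point the unit ball is no longer a convex body, Minkowski's second theorem no longer applies, and you are back to needing the paper's argument.
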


Given a closed manifold $M$ of dimension $m$ and a class $\zeta \neq 0 \in H^1(M;\Z_2)$ set
$$
L(\zeta):=\inf \{\ell_g(\gamma) \mid \gamma \,  \, \text{is a closed curve with} \, \, \langle\zeta,[\gamma]\rangle\neq 0\}
$$
where $[\gamma]$ denotes the homology class in $H_1(M;\Z_2)$ corresponding to the curve $\gamma$ and $\langle\cdot,\cdot\rangle$ the pairing between $\Z_2$-cohomology and homology.
The central result of this note is the following statement  from which Theorem \ref{th:tori} can be deduced.

\begin{theorem}\label{th:Minkowski2}
Let $(M,g)$ be a closed Riemannian manifold of dimension $m\geq 1$ and suppose that there exist (not necessarily distinct) cohomology classes $\zeta_1,\ldots,\zeta_m$ in $H^1(M;\Z_2)$ whose cup product $\zeta_1 \cup \ldots \cup \zeta_m \neq 0$ in $H^m(M;\Z_2)$. 
Then
$$
 \prod_{i=1}^m L(\zeta_i)\leq m^m \cdot \vol \, (M,g).
$$
\end{theorem}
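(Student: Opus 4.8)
The plan is to prove the statement by induction on the dimension $m$, using at each step an area--minimizing hypersurface dual to one of the classes in order to reduce the problem to dimension $m-1$.

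For the base case $m=1$, $M$ is a finite disjoint union of circles and $\zeta_1\neq 0$; a shortest closed curve detecting $\zeta_1$ is a circle component of $M$ on which $\zeta_1$ restricts nontrivially, so $L(\zeta_1)$ is at most the total length $\vol(M,g)$, which is the desired bound since $1^1=1$.

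For the inductive step I first set up the topological reduction. I choose one of the classes, say $\zeta_m$, and let $N\subset M$ be a closed hypersurface realizing the minimal $(m-1)$--volume in the mod--$2$ homology class $\mathrm{PD}(\zeta_m)\in H_{m-1}(M;\Z_2)$; this is the minimal hypersurface of the title. Two facts make the induction run. First, Poincaré duality gives, for every $\beta\in H^{m-1}(M;\Z_2)$, the identity $\langle \zeta_m\cup\beta,[M]\rangle=\langle \beta|_N,[N]\rangle$, obtained by capping with $[N]=\mathrm{PD}(\zeta_m)$; applied to $\beta=\zeta_1\cup\cdots\cup\zeta_{m-1}$ it shows that the restricted classes satisfy $\zeta_1|_N\cup\cdots\cup\zeta_{m-1}|_N\neq 0$ in $H^{m-1}(N;\Z_2)$, so the inductive hypothesis applies to $(N,g|_N)$. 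Second, naturality of the pairing under the inclusion $\iota\colon N\hookrightarrow M$ shows that any closed curve in $N$ detecting $\zeta_i|_N$ also detects $\zeta_i$ in $M$, whence $L(\zeta_i)\le L_N(\zeta_i|_N)$, where $L_N$ denotes the invariant computed inside $(N,g|_N)$. Combining these with the induction hypothesis yields $\prod_{i=1}^{m-1}L(\zeta_i)\le\prod_{i=1}^{m-1}L_N(\zeta_i|_N)\le (m-1)^{m-1}\,\vol(N,g|_N)$.

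It remains to feed in the single metric inequality $L(\zeta_m)\cdot\vol(N)\le \tfrac{m^m}{(m-1)^{m-1}}\,\vol(M)$, after which the factor $(m-1)^{m-1}$ from the inductive hypothesis combines with $\tfrac{m^m}{(m-1)^{m-1}}$ to give exactly $m^m$. To prove this inequality I would cut $M$ along $N$ and sweep out the resulting manifold by the level sets of the distance to one boundary copy of $N$: the coarea formula bounds the integral of the cross-sectional volumes by $\vol(M)$, minimality of $N$ bounds each cross-section below by $\vol(N)$, and the length of the sweep is governed by $L(\zeta_m)$ since crossing $N$ costs a curve detecting $\zeta_m$. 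This coarea/sweepout estimate is the main obstacle, and it is genuinely delicate: a minimal hypersurface dual to an \emph{arbitrarily prescribed} class can have volume much larger than $\vol(M)/L(\zeta_m)$ (already visible on sheared flat tori), so the class $\zeta_m$ one slices along must be chosen carefully --- or the constant optimized over the admissible choices --- and it is precisely this optimization that produces the constant $m^m$ rather than something smaller. A secondary technical point is the regularity of the minimizer: in ambient dimension $m\ge 8$ the area--minimizing hypersurface may be singular, and one must check that its singular set (of codimension at least $7$) affects neither the fundamental class and cup--product computation of the reduction step nor the coarea estimate.
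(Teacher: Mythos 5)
Your topological reduction --- restricting $\zeta_1,\ldots,\zeta_{m-1}$ to a minimizing hypersurface $N$ dual to $\zeta_m$, checking via Poincar\'e duality that the restricted cup product survives, and noting $L(\zeta_i)\le L_N(\zeta_i|_N)$ --- is exactly the paper's, and the regularity issue you raise is dealt with there by taking a smooth $\delta$-minimizing hypersurface rather than a true minimizer. But the step you yourself flag as ``the main obstacle'' is a genuine gap, not a technical one: the inequality $L(\zeta_m)\cdot\vol(N)\le\frac{m^m}{(m-1)^{m-1}}\,\vol(M)$ is what your factorized induction needs and it is not established, nor does minimality help, since minimality only says $N$ is no larger than other representatives of its class, not that some representative is small. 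The sweep-out you propose (level sets of the distance to $N$) cannot be run out to radius comparable to $L(\zeta_m)$: to show a level set of the tube $U$ has volume at least $\vol(N)$ one needs $[N\cap U]=0$ in $H_{m-1}(U,\partial U;\Z_2)$, which is proved by factoring $1$-cycles of $U$ into loops of length less than $L(\zeta_m)$; but loops in the full tube around $N$ factor into loops whose lengths are governed by the \emph{diameter} of $N$, over which you have no control. That is the true content of your sheared-torus worry, and no optimization over which class to slice along removes it (nor is that where the constant $m^m$ comes from).

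The paper's resolution is structurally different: it never bounds $\vol(N)$ at all. It strengthens the induction hypothesis to produce a nested chain $Z_0=\{z_0\}\subset Z_1\subset\cdots\subset Z_{m-1}=N\subset Z_m=M$ together with regions $D_k\subset Z_k$, each $D_k$ the $R_k$-neighborhood of $D_{k-1}$ inside $Z_k$, and proves $\vol(D_m)\ge 2^m\prod_{i=1}^m R_i-O(\delta)$ whenever $2\sum_{k\le i}R_k<L(\zeta_i|_{Z_i})$ for every $i$. Because every point of $D_m$ lies within $\sum_k R_k$ of the single base point $z_0$, the Curve Factoring Lemma applies to $D_m$, the Guth--Nakamura stability lemma gives $\vol_{m-1}\{x\in M \mid d_g(x,D_{m-1})=r\}\ge 2(\vol_{m-1}(Z_{m-1}\cap D_m)-\delta)$, and the coarea formula converts this into the stated volume lower bound; ordering $L(\zeta_1)\le\cdots\le L(\zeta_m)$ and letting $R_i\to L(\zeta_i)/(2m)$ then produces $m^m$. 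To repair your argument you would have to replace the single metric inequality by this nested-tube bookkeeping (or find an independent proof of an upper bound on the volume of some cycle dual to $\zeta_m$ in terms of $\vol(M)/L(\zeta_m)$, which is not supplied).
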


In order to show this result we follow the approach by L.~Guth \cite{Gut10} involving nearly minimal hypersurfaces in his alternative proof of Gromov isosystolic inequality \cite{Gro83} in the special case of manifolds whose $\Z_2$-cohomology has maximal cup-length. \\

Theorem \ref{th:tori} can be deduced from this result as follows. 
 Choose a sequence of $\Z_2$-homologically independent closed geodesics $(\gamma_1,\ldots,\gamma_{m})$ in $(\T^m,g)$ corresponding to the $m$ first successive minima, that is
$$
\ell_g(\gamma_k)=\min \{ \lambda \mid \text{there exist} \, \, k  \, \, \Z_2\text{-homologically independent closed curves of length at most} \, \, \lambda\}.
$$
The dual basis $(\zeta_1,\ldots,\zeta_m)$ to the basis $([\gamma_1],\ldots,[\gamma_m])$ satisfies the condition $\zeta_1 \cup \ldots \cup \zeta_m \neq 0$ and that $L(\zeta_k)=\ell_g(\gamma_k)$ which prove Theorem \ref{th:tori}.\\

The rest of this note is devoted to the proof of Theorem \ref{th:Minkowski2}.
By a strictly increasing sequence of closed submanifolds of $M$ we mean a sequence $Z_0\subset Z_1 \subset \ldots \subset Z_{m-1}\subset Z_m=M$ of closed manifolds $Z_i$ of dimension $i$ for $i=0,\ldots,m$. In particular $Z_0$ is a finite collection of points of $M$ and $Z_{m-1}$ an hypersurface.
Given such a sequence and $m$ positive numbers $R_1,\ldots,R_m$ we define another sequence of subsets $D_1\subset \ldots \subset D_m$ by induction as follows: 
$$
D_1:=\{z \in Z_1 \mid d_g(z,Z_0)\leq R_1\} \subset Z_1,
$$
and for $k=2,\ldots,m$
$$
D_k:=\{z \in Z_k \mid d_g(z,D_{k-1})\leq R_k\} \subset Z_k.
$$

Fix $\delta >0$. We will prove by induction that there exists a strictly increasing sequence $Z_0=\{z_0\} \subset Z_1 \subset \ldots \subset Z_{m-1}\subset Z_m=M$ of closed submanifolds of $M$ such that
\begin{itemize}
\item the homology class $[Z_{i-1}] \in H_{i}(Z_i;\Z_2)$ is the Poincar\'e dual of the restriction  $\zeta'_i:={\zeta_i}_{\mid Z_i}$ ;
\item for any sequence $\{R_i\}_{i=1}^m$ of positive numbers such that $2 \sum_{k=1}^i R_k< L(\zeta'_i)$ for $i=1,\ldots,m$ then
$$
\vol \, D_m\geq 2^m \, \prod_{i=1}^m R_i-O(\delta).
$$
\end{itemize}
As $L(\zeta_i)\leq L(\zeta'_i)$ (with equality at least for $i=m$), ordering $\zeta_1,\ldots,\zeta_m$ such that $L(\zeta_1)\leq\ldots\leq L(\zeta_m)$, taking $R_i\to\frac{L(\zeta_i)}{2m}^-$ and then letting $\delta \to 0$ in the above inequality implies Theorem \ref{th:Minkowski2}.\\

 The case $m=1$ is trivial, so suppose that $m>1$ and that the statement is proved for dimensions at most $m-1$. Let ${\bf Z_{m-1}} \in H_{m-1}(M;\Z_2)$ be the Poincar\'e dual to $\zeta_m$. We fix  a smooth embedded and closed hypersurface $Z_{m-1}$ which is {\it $\delta$-minimizing} in the homology class ${\bf Z_{m-1}}$: any other smooth hypersurface $Z'$ representing ${\bf Z_{m-1}}$ satisfies $\vol_{m-1} \,Z' \geq \vol_{m-1} \,Z_{m-1}-\delta$.
 
 The restriction to $Z_{m-1}$ of the cohomologic classes $\zeta_1,\ldots,\zeta_{m-1}$ gives a family of cohomologic classes $\zeta''_1,\ldots,\zeta''_{m-1}$ in $H^1(Z_{m-1};\Z_2)$ such that $\zeta''_1 \cup \ldots \cup \zeta''_{m-1} \neq 0$ in $H^{m-1}(Z_{m-1};\Z_2)$. By the induction hypothesis there exists a strictly increasing sequence $Z_0=\{z_0\} \subset Z_1 \subset \ldots \subset Z_{m-2}$ of submanifolds of $Z_{m-1}$ such that
\begin{itemize}
\item for $i\leq m-1$ the homology class of $Z_{i-1}$ is the Poincar\'e dual of the restriction of $\zeta''_i$ to $Z_i$, which coincides with $\zeta'_i$ defined as the restriction of $\zeta_i$ to $Z_i$ ;
\item for any sequence $\{R_i\}_{i=1}^{m-1}$ of positive numbers satisfying $2\sum_{k=1}^i R_k< L(\zeta'_i)$ for $i\leq m-1$
$$
\vol_{m-1} \, D_{m-1}\geq 2^{m-1} \, \prod_{k=1}^{m-1} R_i-O(\delta).
$$
\end{itemize}

Now fix a sequence $\{R_i\}_{i=1}^m$ of positive numbers such that $2\sum_{k=1}^i R_k< L(\zeta'_i)$ for $i=1,\ldots,m$. We will need the following Lemma:

\begin{lemma}\label{lem:CFL}
Let $c$ be a $1$-cycle in $D_m$. Then there exist loops $\gamma_1,\ldots,\gamma_k \subset D_m$ with $l_g(\gamma_i)<L(\zeta'_m)$ for $i=1,\ldots,k$ and such that $c$ is homologous to the $1$-cycle $\gamma_1+\ldots+\gamma_k$.
\end{lemma}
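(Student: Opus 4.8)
The plan is to reduce the statement to the case of a single loop and then to cut that loop into short pieces by coning to the basepoint $z_0$. First I would record the only geometric input needed, namely that $D_m$ has small radius when measured from $z_0$. By construction $D_m$ is the $R_m$-neighbourhood (inside $Z_m=M$) of $D_{m-1}$, which is in turn a neighbourhood of $D_{m-2}$, and so on down to $Z_0=\{z_0\}$. Descending through this nested family one produces, for every $z\in D_m$, a path $\sigma_z$ joining $z$ to $z_0$ and lying in $D_m$: follow a minimizing segment from $z$ to its nearest point of $D_{m-1}$, then (by the analogous lower-dimensional construction inside $Z_{m-1}$) a path in $D_{m-1}\subset D_m$ reaching $z_0$. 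Each descent step of index $k$ contributes length at most $R_k$, so that $\ell_g(\sigma_z)\le \sum_{k=1}^m R_k$. The hypothesis $2\sum_{k=1}^m R_k<L(\zeta'_m)$ then gives the crucial gap $\ell_g(\sigma_z)<\tfrac12 L(\zeta'_m)$, with room to spare.

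Second, I would realize $c$ by a finite union of piecewise-smooth loops and treat each loop $\gamma$ separately, since everything is additive over $\Z_2$. Parametrizing $\gamma$ by arclength, choose subdivision points $p_0,p_1,\ldots,p_N=p_0$ on $\gamma$ cutting it into consecutive arcs $a_j$ from $p_{j-1}$ to $p_j$, and take the subdivision fine enough that $\ell_g(a_j)<L(\zeta'_m)-2\sum_{k=1}^m R_k$ for every $j$ (possible since the right-hand side is a fixed positive number). For each vertex $p_j$ fix the path $\sigma_j:=\sigma_{p_j}$ to $z_0$ produced above, with the single convention $\sigma_0=\sigma_N$. Define the loop $\gamma_j:=\sigma_{j-1}\ast a_j\ast\bar\sigma_j$, based at $z_0$ and contained in $D_m$. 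Its length is $\ell_g(\sigma_{j-1})+\ell_g(a_j)+\ell_g(\sigma_j)\le 2\sum_{k=1}^m R_k+\ell_g(a_j)<L(\zeta'_m)$, exactly as required.

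Third comes the bookkeeping showing $c$ is homologous to $\sum_{j}\gamma_j$. Working with $\Z_2$ coefficients, a path and its reverse represent the same $1$-chain, so as chains one has $\sum_{j=1}^N\gamma_j=\sum_{j=1}^N a_j+\sum_{j=1}^N(\sigma_{j-1}+\sigma_j)$. Every interior path $\sigma_1,\ldots,\sigma_{N-1}$ occurs twice and hence cancels mod $2$, while $\sigma_0+\sigma_N=0$ by the convention $\sigma_0=\sigma_N$; thus the telescoping collapses to $\sum_j a_j=\gamma$. Summing over the loops of $c$ yields $c=\sum_{j}\gamma_j$ as $\Z_2$-chains in $D_m$, which is stronger than the claimed homology.

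The main obstacle is the first step, the metric bookkeeping that keeps the connecting paths inside $D_m$ while bounding their length by $\sum_{k=1}^m R_k$. The delicate point is that a minimizing segment toward the nearest point of $D_{k-1}$ must be certified to remain in $D_m$: this is immediate for the top segment, where every intermediate point stays within $R_m$ of $D_{m-1}$, but for the lower segments one must invoke the analogous construction one dimension down inside $Z_{m-1}$, so that the tail of $\sigma_z$ lies in $D_{m-1}\subset D_m$ rather than merely in the ambient ball about $z_0$. Once this descent is set up, the cutting and the $\Z_2$ telescoping are routine, and the quantitative hypothesis $2\sum_{k=1}^m R_k<L(\zeta'_m)$ is used precisely once, to guarantee the strict length bound on the $\gamma_j$.
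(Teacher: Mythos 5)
Your proposal is correct and follows essentially the same route as the paper, which simply invokes Guth's Curve Factoring Lemma together with the single observation you prove in your first step, namely that every point of $D_m$ joins to $z_0$ by a path in $D_m$ of length at most $\sum_{k=1}^m R_k$. Your subdivision, coning to $z_0$, and $\Z_2$ cancellation are exactly the content of that lemma, and you correctly flag the one delicate point (keeping the descending segments inside the nested sets $D_{k}$) that the paper leaves implicit.
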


\begin{proof}[Proof of the Lemma.]
We proceed as in the Curve Factoring Lemma (see  \cite{Gut10}). Just observe that any point of $D_m$ can be connected to $z_0$ through a path in $D_m$ of length at most $\sum_{k=1}^m R_k$.
\end{proof}

Using this Lemma, we can prove the following analog of the version of Stability Lemma due to Nakamura \cite{Nak13}.

\begin{lemma}
For any $r\leq R_m$
$$
\vol_{m-1} \{x \in M \mid d_g(x,D_{m-1})=r \}\geq 2 \, ( \vol_{m-1} \{x \in Z_{m-1}\mid d_g(x,D_{m-1})\leq r\}-\delta).
$$
\end{lemma}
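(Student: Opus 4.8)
The plan is to prove the inequality by a cut-and-paste argument on the $\delta$-minimizing hypersurface $Z_{m-1}$, comparing the portion of $Z_{m-1}$ lying in the tube of radius $r$ around $D_{m-1}$ with the two sides of the distance sphere bounding that tube. I would set $\Omega:=\{x\in M \mid d_g(x,D_{m-1})\leq r\}$, together with the interface $A:=Z_{m-1}\cap\Omega=\{x\in Z_{m-1}\mid d_g(x,D_{m-1})\leq r\}$, the distance sphere $S(r):=\{x\in M\mid d_g(x,D_{m-1})=r\}=\partial\Omega$, and $\Sigma:=Z_{m-1}\cap S(r)=\partial A$. The target inequality reads $\vol_{m-1}S(r)\geq 2(\vol_{m-1}A-\delta)$.

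The first key step is to split the tube. Since $r\leq R_m$ we have $\Omega\subset D_m$, so by Lemma~\ref{lem:CFL} every loop in $\Omega$ is homologous in $D_m$ to a sum of loops of length $<L(\zeta'_m)=L(\zeta_m)$, each of which pairs trivially with $\zeta_m$ by definition of $L(\zeta_m)$; hence $\zeta_m$ restricts to $0$ in $H^1(\Omega;\Z_2)$. Consequently the double cover of $M$ associated with $\zeta_m$ trivializes over $\Omega$, i.e. the $\Z_2$-valued "side of $Z_{m-1}$" is consistently defined on $\Omega$, and pulling back the two hemispheres of the corresponding classifying map $\Omega\to\mathbb{RP}^\infty$ yields a decomposition $\Omega\setminus A=\Omega^+\sqcup\Omega^-$ into two open regions separated by $A$. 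This coloring splits the sphere as a disjoint union $S(r)=S^+\sqcup S^-$ with $S^\pm=\overline{\Omega^\pm}\cap S(r)$, $\partial S^\pm=\Sigma$, and $\partial\Omega^\pm=A+S^\pm$.

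Next I would form the two competitor hypersurfaces $Z^\pm:=(Z_{m-1}\setminus A)\cup S^\pm$. Each is a closed $\Z_2$-cycle, since $\partial(Z_{m-1}\setminus A)=\Sigma=\partial S^\pm$ cancel, and each is homologous to $Z_{m-1}$ because the symmetric difference $Z^\pm+Z_{m-1}=A+S^\pm=\partial\Omega^\pm$ bounds. The $\delta$-minimality of $Z_{m-1}$ in the class ${\bf Z_{m-1}}=\mathrm{PD}(\zeta_m)$ then gives $\vol_{m-1}Z^\pm\geq\vol_{m-1}Z_{m-1}-\delta$, and since $\vol_{m-1}Z^\pm=\vol_{m-1}Z_{m-1}-\vol_{m-1}A+\vol_{m-1}S^\pm$ this simplifies to $\vol_{m-1}S^\pm\geq\vol_{m-1}A-\delta$. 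Adding the two inequalities and using $\vol_{m-1}S(r)=\vol_{m-1}S^++\vol_{m-1}S^-$ (their overlap $\Sigma$ being $(m-2)$-dimensional, hence negligible) yields $\vol_{m-1}S(r)\geq 2(\vol_{m-1}A-\delta)$, which is exactly the claim.

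The main obstacle is regularity: distance spheres are only Lipschitz, so $S(r)$, $S^\pm$ and $A$ must be handled as $\Z_2$-rectifiable cycles (or suitably smoothed) before the minimality comparison can be applied to $Z^\pm$; by the coarea formula this is unproblematic for almost every $r$, which is all the subsequent integration of $\vol_{m-1}S(r)$ over $r\in[0,R_m]$ requires. A secondary point to pin down is that the vanishing of $\zeta_m$ on $\Omega$ genuinely produces the separating decomposition $\Omega\setminus A=\Omega^+\sqcup\Omega^-$: this is precisely where the hypothesis $2\sum_{k=1}^m R_k<L(\zeta'_m)$ enters through the Curve Factoring Lemma, since without it the tube need not be split by $A$ and the cut-and-paste comparison would collapse.
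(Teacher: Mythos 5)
Your proof is correct and follows essentially the same route as the paper: use the Curve Factoring Lemma to show the topology of the tube is trivial with respect to $\zeta_m$, then cut-and-paste two competitor hypersurfaces and invoke the $\delta$-minimality of $Z_{m-1}$ twice. The only cosmetic differences are that you derive the separating decomposition $\Omega\setminus A=\Omega^+\sqcup\Omega^-$ from the vanishing of $\zeta_m|_{\Omega}$ via the associated double cover, whereas the paper states the equivalent fact $[Z_{m-1}\cap D_m]=0$ in $H_{m-1}(D_m,\partial D_m;\Z_2)$ via Poincar\'e--Lefschetz duality, and that you write out the comparison argument which the paper defers to Nakamura's Stability Lemma \emph{mutatis mutandis}.
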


\begin{proof}[Proof of the Lemma.] First note we only have to prove the inequality for $r=R_m$ which writes as follows:
$$
\vol_{m-1} \{x \in M \mid d_g(x,D_{m-1})=R_m \}\geq 2 \, ( \vol_{m-1} (Z_{m-1}\cap D_m)-\delta).
$$
We argue as in the proof of the Stability Lemma in \cite{Nak13}. First note that 
$$
[Z_{m-1}\cap D_m]=0 \in H_{m-1}(D_m,\partial D_m;\Z_2).
$$
For this recall the following argumentation due to \cite{Gut10}. If this is not the case, by the Poincar\'e-Lefschetz duality the cycle $Z_{m-1}\cap D_m$ has a non-zero algebraic intersection number with an absolute cycle $c$ in $D_m$. Using Lemma \ref{lem:CFL} we find a finite number of loops $\gamma_1,\ldots,\gamma_k \subset D_m$ with $l_g(\gamma_i)<L(\zeta_m)$ for $i=1,\ldots,k$ and such that $c$ is homologous to the $1$-cycle $\gamma_1+\ldots+\gamma_k$. But this implies that for some $i=1,\ldots,k$ the intersection with $Z_{m-1}$ is not zero which gives a contradiction with the definition of $L(\zeta_m)$.
Now the proof proceeds {\it mutatis mutandis} as in \cite{Nak13}.
\end{proof}

Then by the coarea formula
\begin{eqnarray*}
\vol_m  \, D_m &=&  \int_0^{R_m} \vol_{m-1} \{x \in M \mid d_g(x,D_{m-1})=r \} dr\\
&\geq&  \int_0^{R_m} 2 \, ( \vol_{m-1} \{x \in Z_{m-1}\mid d_g(x,D_{m-1})\leq r\}-\delta)dr\\
& \geq &  \int_0^{R_m} 2 \, ( \vol_{m-1} D_{m-1}-\delta)dr\\
&\geq& 2 R_m \, ( \vol_{m-1} D_{m-1}-\delta)
\end{eqnarray*}
which proves the assertion.

\end{document}